
\documentclass[12pt,twoside]{amsart}
\usepackage{amsmath, amsthm, amscd, amsfonts, amssymb, graphicx}
\usepackage{enumerate}
\usepackage[colorlinks=true,
linkcolor=magenta,
urlcolor=cyan,
citecolor=cyan]{hyperref}

\setcounter{MaxMatrixCols}{10}

\addtolength{\topmargin}{-1.5cm}
\linespread {1.3}
\textwidth 17cm
\textheight 23cm
\addtolength{\hoffset}{-0.3cm}
\oddsidemargin 0cm
\evensidemargin 0cm
\setcounter{page}{1}
\newtheorem{theorem}{Theorem}
\newtheorem{lemma}{Lemma}
\newtheorem{remark}{Remark}

\newtheorem{corollary}{Corollary}

\newtheorem{proposition}{Proposition}
\numberwithin{equation}{section}

\begin{document}
\title{A note around operator Bellman inequality}
\author{Shiva Sheybani$^1$, Mohsen Erfanian Omidvar$^2$, and Mahnaz Khanehgir$^3$}
\subjclass[2010]{Primary 47A12. Secondary 47A30.}
\keywords{Operator inequality, Bellman inequality, convex function, positive operator, Kantorovich constant.}
\maketitle

\begin{abstract}
In this paper, we shall give an extension of operator Bellman inequality. This result is estimated via Kantorovich constant.
\end{abstract}

\pagestyle{myheadings} 
\markboth{\centerline {A note around operator Bellman inequality}}
{\centerline {S. Sheybani, M.E. Omidvar \& M. Khanehgir}} \bigskip \bigskip 

\section{Introduction}
\vskip 0.4 true cm
Let $\mathcal{B}(\mathcal{H})$ be the $C^*$ algebra of bounded linear operators on a complex Hilbert space $\mathcal{H}.$ If $A\in\mathcal{B}(\mathcal{H})$ is  positive, we write  denote $A\ge 0.$ For two self-ajoint operators $A,B\in\mathcal{B}(\mathcal{H})$, we  write  $A\le B$ if $B-A\geq 0$. For a real-valued function $f$ of a real variable and a self-adjoint operator $A\in \mathcal{B}\left( \mathcal{H} \right)$, the value $f\left( A \right)$ is understood by means of the functional calculus.\\
Let $J$ be a real interval of any type. A continuous function $f:J\to \mathbb{R}$ is said to be operator concave if $f\left( A{{\nabla }_{v}}B \right)\ge f\left( A \right){{\nabla }_{v}}f\left( B \right)$ holds for each $v\in \left[ 0,1 \right]$ and every pair of self-adjoint operators $A,B\in \mathcal{B}\left( \mathcal{H} \right)$, with spectra in $J$.

A linear map $\Phi :\mathcal{B}\left( \mathcal{H} \right)\to \mathcal{B}\left( \mathcal{K} \right)$ is said to be positive if $\Phi \left( A \right)\ge 0$ when $A\geq 0$. If, in addition, $\Phi \left( I \right)=I$, it is said to be normalized.

Bellman \cite{6} showed that if $n,r$ are positive integers and $A,B,{{a}_{i}},{{b}_{i}}\left( 1\le i\le n \right)$ are positive real numbers such that $\sum\nolimits_{i=1}^{n}{a_{i}^{r}}\le {{A}^{r}}$ and $\sum\nolimits_{i=1}^{n}{b_{i}^{r}}\le {{B}^{r}}$, then
\[{{\left( {{A}^{r}}-\sum\limits_{i=1}^{n}{a_{i}^{r}} \right)}^{\frac{1}{r}}}+{{\left( {{B}^{r}}-\sum\limits_{i=1}^{n}{b_{i}^{r}} \right)}^{\frac{1}{r}}}\le {{\left( {{\left( A+B \right)}^{r}}-\sum\limits_{i=1}^{n}{{{\left( {{a}_{i}}+{{b}_{i}} \right)}^{r}}} \right)}^{\frac{1}{r}}}.\]
The operator Bellman inequality \cite[Corollary 2.2]{5} asserts that: If $\Phi $ is a normalized positive linear map on $\mathcal{B}\left( \mathcal{H} \right)$, $A, B$ are contractions (in the sense that $\left\| A \right\|,\left\| B \right\|\le 1$), then
\begin{equation}\label{10}
\Phi \left( {{\left( I-A \right)}^{r}}{{\nabla }_{v}}{{\left( I-B \right)}^{r}} \right)\le \Phi {{\left( I-A{{\nabla }_{v}}B \right)}^{r}}
\end{equation}
where $r,v\in \left[ 0,1 \right]$.

By applying similar method presented in \cite{5}, we infer that
\[\Phi {{\left( I-A{{\nabla }_{v}}B \right)}^{r}}\le \Phi \left( {{\left( I-A \right)}^{r}}{{\nabla }_{v}}{{\left( I-B \right)}^{r}} \right),\text{ }r\in \left[ -1,0 \right]\cup \left[ 1,2 \right].\]
Of course, the above inequality does not hold in general  when  $r\notin \left[ -1,0 \right]\cup \left[ 1,2 \right]$. For instance,  if we take $\Phi \left( X \right)=X$, $A=\left( \begin{matrix}
2 & 1  \\
1 & 1  \\
\end{matrix} \right)$, $B=\left( \begin{matrix}
1 & 0  \\
0 & 0  \\
\end{matrix} \right)$, $v=\frac{1}{2}$, and $f\left( t \right)={{t}^{3}}$. By a simple calculation, we have
\[\left( \begin{matrix}
-0.25 & -0.25  \\
-0.25 & 0.25  \\
\end{matrix} \right)=\Phi {{\left( I-A{{\nabla }_{v}}B \right)}^{r}}\nleq	 \Phi \left( {{\left( I-A \right)}^{r}}{{\nabla }_{v}}{{\left( I-B \right)}^{r}} \right)=\left( \begin{matrix}
-1.5 & -1  \\
-1 & 0  \\
\end{matrix} \right).\] 

Very recently, as an extension of inequality \eqref{10} to the negative parameter, the authors in \cite[Theorem 3.2]{4} proved that for any contraction operators $A,B\in \mathcal{B}\left( \mathcal{H} \right)$, $v\in \left[ 0,1 \right]$, and $r\in \left[ -1,0 \right]$
\[\begin{aligned}
 \Phi {{\left( I-A{{\nabla }_{v}}B \right)}^{r}}&\le \Phi {{\left( I-A \right)}^{r}}{{\sharp}_{v}}\Phi {{\left( I-B \right)}^{r}} \\ 
& \le \Phi \left( {{\left( I-A \right)}^{r}}{{\sharp}_{v}}{{\left( I-B \right)}^{r}} \right) \\ 
& \le \Phi \left( {{\left( I-A \right)}^{r}}{{\nabla }_{v}}{{\left( I-B \right)}^{r}} \right)  
\end{aligned}\]
holds, where the notation ${{\sharp}_{v}}$ used for the weighted geometric mean between two positive operators, and is defined as follows
\[A{{\sharp}_{v}}B={{A}^{\frac{1}{2}}}{{\left( {{A}^{-\frac{1}{2}}}B{{A}^{-\frac{1}{2}}} \right)}^{v}}{{A}^{\frac{1}{2}}}\qquad\text{ }v\in \left[ 0,1 \right].\]

In this paper, we aim to provide a generalization of the inequality \eqref{10} to $r\notin \left[ -1,0 \right]\cup \left[ 1,2 \right]$. Additionally, we show a reverse Bellman type inequality of additive type, by using some ideas from \cite{5,4}.
\section{Main Results}
\vskip 0.4 true cm
In order to prove our theorem, we need the following lemmas.
\begin{lemma}\label{8}
	\cite[Theorem 1.2]{1} If $f:J\to \mathbb{R}$ is a concave function, then
\[\left\langle f\left( A \right)x,x \right\rangle \le f\left( \left\langle Ax,x \right\rangle  \right)\]
for any self-adjoint operator $A$ with spectra contained in $J$  and any unit vector $x\in \mathcal{H}$.
\end{lemma}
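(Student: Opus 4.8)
The plan is to reduce the operator statement to the classical Jensen inequality for concave functions by way of the spectral theorem. First I would invoke the spectral decomposition of the self-adjoint operator $A$, writing $A=\int_{J}\lambda\,dE(\lambda)$, where $E$ is the spectral measure supported on the spectrum of $A$, which by hypothesis is contained in $J$. For the fixed unit vector $x\in\mathcal{H}$ I would then define a scalar set function $\mu$ on the Borel subsets of $J$ by $\mu(S)=\langle E(S)x,x\rangle$. The crucial observation is that $\mu$ is a \emph{probability} measure: positivity of the spectral projections gives $\mu\ge 0$, while the normalization $\mu(J)=\langle E(J)x,x\rangle=\langle x,x\rangle=\|x\|^{2}=1$ uses precisely the hypothesis that $x$ is a unit vector.

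With this measure in hand, the functional calculus supplies the two integral representations
\[\langle Ax,x\rangle=\int_{J}\lambda\,d\mu(\lambda),\qquad \langle f(A)x,x\rangle=\int_{J}f(\lambda)\,d\mu(\lambda).\]
Because $J$ is an interval, hence convex, the barycenter $m:=\int_{J}\lambda\,d\mu(\lambda)=\langle Ax,x\rangle$ again lies in $J$, so $f(m)$ is defined. Applying the classical integral Jensen inequality for the concave function $f$ against the probability measure $\mu$ yields $\int_{J}f\,d\mu\le f\!\left(\int_{J}\lambda\,d\mu\right)$, which is exactly the claimed inequality $\langle f(A)x,x\rangle\le f(\langle Ax,x\rangle)$.

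An alternative route, which I find cleaner and avoids explicit measure theory, is the supporting-line method. At the point $m=\langle Ax,x\rangle\in J$ I would choose a supporting line $\ell(t)=f(m)+c(t-m)$ satisfying $f(t)\le\ell(t)$ for all $t\in J$; such a line exists at any interior point since a concave function admits a supporting line there. Feeding $A$ into this pointwise scalar inequality through the functional calculus gives $f(A)\le \ell(A)=f(m)I+c(A-mI)$ in the operator order, and compressing against the vector state $\langle\,\cdot\,x,x\rangle$ produces $\langle f(A)x,x\rangle\le f(m)+c(\langle Ax,x\rangle-m)=f(m)=f(\langle Ax,x\rangle)$, again using $\|x\|=1$.

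The only genuine obstacle is the boundary case, in which $m=\langle Ax,x\rangle$ is an endpoint of $J$ and no two-sided supporting line need exist. I expect this to be harmless and to degenerate completely: if $m$ equals, say, the left endpoint $a$ of $J$, then $\langle(A-aI)x,x\rangle=0$ together with $A-aI\ge 0$ forces $(A-aI)^{1/2}x=0$, so $x$ is an eigenvector of $A$ with eigenvalue $a$; consequently $\langle f(A)x,x\rangle=f(a)=f(m)$ and the asserted inequality holds with equality. Thus the substantive content of the proof is the single reduction to the scalar Jensen inequality, with the endpoint situation handled by this eigenvector observation.
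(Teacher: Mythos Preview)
Your argument is correct; both routes---the spectral-measure reduction to scalar Jensen and the supporting-line trick---are valid and standard, and your treatment of the endpoint case is the right way to close the gap. There is nothing to compare against in the paper itself, however: the lemma is quoted without proof from \cite[Theorem 1.2]{1}, where exactly this Mond--Pe\v{c}ari\'{c} inequality is established by essentially the argument you give.
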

\begin{lemma}
\cite[Corollary 4.12]{2}) Let $A\in \mathcal{B}\left( \mathcal{H} \right)$ such that $m\le A\le M$ and $\Phi $ be a normalized positive linear map. If $f:\left[ m,M \right]\to \mathbb{R}$ is a concave function, then
\begin{equation}\label{1}
K\left( m,M,f \right)\Phi \left( f\left( A \right) \right)\le f\left( \Phi \left( A \right) \right)\le \frac{1}{K\left( m,M,f \right)}\Phi \left( f\left( A \right) \right)
\end{equation}
where
\begin{equation}\label{2}
K\left( m,M,f \right)=\max \left\{ \frac{1}{f\left( t \right)}\left( \frac{M-t}{M-m}f\left( m \right)+\frac{t-m}{M-m}f\left( M \right) \right):\text{ }m\le t\le M \right\}.
\end{equation}
\end{lemma}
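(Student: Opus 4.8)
The plan is to run the Mond--Pe\v{c}ari\'{c} method: reduce the operator statement to two elementary scalar inequalities on $[m,M]$ and then promote them through the functional calculus. First I would introduce the affine chord of $f$ at the endpoints,
\[\ell(t)=\frac{M-t}{M-m}\,f(m)+\frac{t-m}{M-m}\,f(M),\qquad t\in[m,M].\]
Concavity of $f$ is exactly the statement that its graph lies above this chord, i.e.\ $\ell(t)\le f(t)$ on $[m,M]$. Reading the definition \eqref{2} as an extremal ratio, $\ell(t)/f(t)\le K(m,M,f)$ for every $t$, and clearing the denominator $f(t)$, gives a second comparison between $\ell$ and $K(m,M,f)\,f$. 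Together these produce the scalar chain $K(m,M,f)\,f(t)\le\ell(t)\le f(t)$ on $[m,M]$, which is the whole arithmetic content of the lemma.

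Next I would transfer this chain to operators. Since $\Phi$ is normalized and positive, $mI\le\Phi(A)\le MI$, so the spectra of both $A$ and $\Phi(A)$ lie in $[m,M]$ and each scalar inequality may be applied by the functional calculus; the key structural fact is that $\ell$ is affine, whence $\Phi(\ell(A))=\ell(\Phi(A))$. For the upper estimate I would apply $\ell\le f$ to $A$ and compose with $\Phi$, obtaining $\ell(\Phi(A))=\Phi(\ell(A))\le\Phi(f(A))$, and apply $K(m,M,f)\,f\le\ell$ directly to $\Phi(A)$, obtaining $K(m,M,f)\,f(\Phi(A))\le\ell(\Phi(A))$; chaining and dividing by $K(m,M,f)$ yields $f(\Phi(A))\le\frac{1}{K(m,M,f)}\Phi(f(A))$. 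For the lower estimate I would simply interchange the roles of $A$ and $\Phi(A)$: apply $K(m,M,f)\,f\le\ell$ to $A$ and compose with $\Phi$ to get $K(m,M,f)\,\Phi(f(A))\le\ell(\Phi(A))$, and apply $\ell\le f$ to $\Phi(A)$ to get $\ell(\Phi(A))\le f(\Phi(A))$; chaining gives the remaining half $K(m,M,f)\,\Phi(f(A))\le f(\Phi(A))$.

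The hard part is not analytic but a matter of orientation. Clearing $f(t)$ out of the ratio $\ell(t)/f(t)\le K(m,M,f)$ reverses the inequality according to the sign of $f$ on $[m,M]$, and it is precisely this sign that makes the chain come out as $K(m,M,f)\,f\le\ell\le f$ with $K(m,M,f)\ge1$ (the regime in which the constant is non-trivial); keeping careful track of the sign of $f$, and hence of the direction of every step once $K(m,M,f)$ is introduced, is the main obstacle and the place where the statement could silently fail if mishandled. As a cross-check, Lemma \ref{8} applied to the self-adjoint operator $\Phi(A)$ supplies the companion scalar estimate $\langle f(\Phi(A))y,y\rangle\le f(\langle\Phi(A)y,y\rangle)$ for every unit vector $y$, so the same two bounds can instead be established vectorwise and then freed of $y$ by arbitrariness.
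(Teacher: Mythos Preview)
The paper does not prove this lemma at all: it is quoted verbatim from \cite[Corollary~4.12]{2} and used as a black box, so there is no in-paper argument to compare against. Your outline is precisely the standard Mond--Pe\v{c}ari\'c argument one would find in that source: sandwich $f$ between the affine chord $\ell$ and the scaled function $K(m,M,f)\,f$ on $[m,M]$, exploit that $\Phi(\ell(A))=\ell(\Phi(A))$ for affine $\ell$, and read off both halves of \eqref{1} by applying the two scalar bounds to $A$ and to $\Phi(A)$ in alternation.

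Your caveat about the sign of $f$ is well placed and is really the only substantive point. As you note, clearing $f(t)$ from $\ell(t)/f(t)\le K(m,M,f)$ yields $K(m,M,f)\,f(t)\le \ell(t)$ only when $f<0$ on $[m,M]$; in the regime $f>0$ the ratio $\ell/f$ attains its maximum value $1$ at the endpoints $t=m,M$, so $K(m,M,f)=1$ and the lemma degenerates to an equality that is false for non-operator-concave $f$. Thus the inequality \eqref{1} as literally stated requires an implicit sign hypothesis (strict negativity of $f$, or else the reversed chain when $f>0$), which the paper suppresses since it is only used downstream via Proposition~\ref{9} for convex power functions. Your argument is correct once that hypothesis is made explicit.
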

We can now state our first main result.
\begin{proposition}\label{14}
Let $A,B\in \mathcal{B}\left( \mathcal{H} \right)$ be two self-adjoint operators such that $m\le A,B\le M$ and $\Phi $ be normalized positive linear map. If $f:\left[ m,M \right]\to \mathbb{R}$ is a concave function, then for any $v\in \left[ 0,1 \right]$
\begin{equation}\label{3}
\Phi \left( f\left( A \right) \right){{\nabla }_{v}}\Phi \left( f\left( B \right) \right)\le \frac{1}{K{{\left( m,M,f \right)}^{2}}}f\left( \Phi \left( A{{\nabla }_{v}}B \right) \right)
\end{equation}
where $K\left( m,M,f \right)$ is defined as \eqref{2}.
\end{proposition}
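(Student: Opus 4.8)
The plan is to produce the factor $\frac{1}{K(m,M,f)^2}$ by applying the Kantorovich-type sandwich \eqref{1} twice: once at the level of the map $\Phi$, and once at the level of vector states, the latter being the device through which the merely concave (not operator concave) function $f$ can be pushed through the arithmetic mean. Throughout write $K := K(m,M,f)$, which is positive so that division is legitimate.

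First I would apply the left-hand inequality of \eqref{1} separately to $A$ and to $B$. Since $m \le A,B \le M$, this yields $\Phi(f(A)) \le \frac{1}{K} f(\Phi(A))$ and $\Phi(f(B)) \le \frac{1}{K} f(\Phi(B))$. Forming the $v$-weighted arithmetic mean, which is a positive and hence order-preserving operation, gives
\[
\Phi(f(A)) \nabla_v \Phi(f(B)) \le \frac{1}{K}\bigl( f(\Phi(A)) \nabla_v f(\Phi(B)) \bigr).
\]
Because $\Phi$ is linear and normalized positive, $C := \Phi(A)$ and $D := \Phi(B)$ are self-adjoint with $m \le C,D \le M$, and moreover $C \nabla_v D = \Phi(A \nabla_v B)$.

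The heart of the argument is the scalar-level estimate $f(C) \nabla_v f(D) \le \frac{1}{K} f(C \nabla_v D)$, which I would prove by testing against an arbitrary unit vector $x$. Applying Lemma \ref{8} to $C$ and to $D$ gives $\langle f(C)x, x\rangle \le f(\langle Cx, x\rangle)$ and $\langle f(D)x, x\rangle \le f(\langle Dx, x\rangle)$; combining these with the scalar (Jensen) concavity inequality $(1-v) f(\langle Cx,x\rangle) + v f(\langle Dx, x\rangle) \le f(\langle (C\nabla_v D)x, x\rangle)$ yields $\langle (f(C)\nabla_v f(D))x, x\rangle \le f(\langle (C\nabla_v D)x, x\rangle)$. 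Now I would view the vector state $X \mapsto \langle Xx, x\rangle$ as a normalized positive linear map into $\mathbb{C}$ and apply the right-hand inequality of \eqref{1} to $E := C \nabla_v D$, whose spectrum lies in $[m,M]$, obtaining $f(\langle Ex, x\rangle) \le \frac{1}{K}\langle f(E)x, x\rangle$. Since $x$ is arbitrary, this gives $f(C)\nabla_v f(D) \le \frac{1}{K} f(C\nabla_v D)$.

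Chaining the two displayed bounds and recalling that $C\nabla_v D = \Phi(A\nabla_v B)$ produces exactly \eqref{3}. The main obstacle is the middle step: since $f$ is only concave and not operator concave, there is no direct operator inequality relating $f(C)\nabla_v f(D)$ to $f(C\nabla_v D)$, and the reduction to vector states, where scalar Jensen is available, together with the reverse Kantorovich bound in \eqref{1}, is what rescues the argument and simultaneously supplies the second factor $\frac{1}{K}$. It remains only to verify the routine points that $K>0$, that $m\le\Phi(A),\Phi(B)\le M$, and that the weighted arithmetic mean preserves the operator order.
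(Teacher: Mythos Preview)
Your proof is correct and uses the same two ingredients as the paper: the vector-state trick (Lemma~\ref{8} plus scalar Jensen plus the right-hand side of \eqref{1}) to obtain the operator inequality $f(X)\nabla_v f(Y)\le \frac{1}{K}f(X\nabla_v Y)$, and one further use of the left-hand side of \eqref{1} to account for $\Phi$. The only difference is the order of assembly: the paper first runs the vector-state argument on $A,B$ themselves, recording the intermediate inequality $f(A)\nabla_v f(B)\le \frac{1}{K}f(A\nabla_v B)$, then applies $\Phi$ and finally the left-hand side of \eqref{1} to $A\nabla_v B$; you instead apply the left-hand side of \eqref{1} to $A$ and $B$ separately and then run the vector-state argument on $\Phi(A),\Phi(B)$. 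Both orderings are valid and yield the same constant; the paper's route has the minor advantage of isolating the $\Phi$-free inequality as a standalone result.
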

\begin{proof} 
The assumption $m\le A,B\le M$ implies $m\le \left\langle A{{\nabla }_{v}}Bx,x \right\rangle \le M$ for any $x\in \mathcal{H}$ with $\left\| x \right\|=1$. So
\[\begin{aligned}
 \left\langle f\left( A \right){{\nabla }_{v}}f\left( B \right)x,x \right\rangle &\le f\left( \left\langle Ax,x \right\rangle  \right){{\nabla }_{v}}f\left( \left\langle Bx,x \right\rangle  \right) \quad \text{(by Lemma \ref{8})}\\ 
& \le f\left( \left\langle A{{\nabla }_{v}}Bx,x \right\rangle  \right) \quad \text{(since $f$ is concave)}\\ 
& \le \frac{1}{K\left( m,M,f \right)}\left\langle f\left( A{{\nabla }_{v}}B \right)x,x \right\rangle   \quad \text{(by RHS of \eqref{1})}
\end{aligned}\]
i.e.,
\[f\left( A \right){{\nabla }_{v}}f\left( B \right)\le \frac{1}{K\left( m,M,f \right)}f\left( A{{\nabla }_{v}}B \right)\]
 which is an inequality of interest in itself. The hypothesis on $\Phi $ ensures that 
\[\begin{aligned}
 \Phi \left( f\left( A \right){{\nabla }_{v}}f\left( B \right) \right)&=\Phi \left( f\left( A \right) \right){{\nabla }_{v}}\Phi \left( f\left( B \right) \right) \\ 
& \le \frac{1}{K\left( m,M,f \right)}\Phi \left( f\left( A{{\nabla }_{v}}B \right) \right) \\ 
& \le \frac{1}{K{{\left( m,M,f \right)}^{2}}}f\left( \Phi \left( A{{\nabla }_{v}}B \right) \right)  \quad \text{(by LHS of \eqref{1})}
\end{aligned}\]
which is the desired inequality \eqref{3}.
\end{proof}
\begin{proposition}\label{9}
Let all the assumptions of Proposition \ref{14} hold except the condition concavity which is changed to convexity. Then
\[\frac{1}{K{{\left( m,M,f \right)}^{2}}}f\left( \Phi \left( A{{\nabla }_{v}}B \right) \right)\le \Phi \left( f\left( A \right) \right){{\nabla }_{v}}\Phi \left( f\left( B \right) \right).\]
\end{proposition}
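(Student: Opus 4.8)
The plan is to run the argument of Proposition \ref{14} in reverse, exploiting the fact that $-f$ is concave precisely when $f$ is convex. First I would record the two ingredients obtained by feeding $-f$ into the hypotheses. Applying Lemma \ref{8} to the concave function $-f$ gives, for every unit vector $x$, the reversed Jensen inequality $f(\langle Ax,x\rangle)\le\langle f(A)x,x\rangle$. Applying the second lemma to $-f$, together with the elementary identity $K(m,M,-f)=K(m,M,f)$ (the two sign changes in the definition \eqref{2} cancel), turns \eqref{1} into its convex counterpart $\tfrac{1}{K(m,M,f)}\Phi(f(A))\le f(\Phi(A))\le K(m,M,f)\Phi(f(A))$. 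These are the exact mirror images of the tools used for the concave case.

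Next I would mirror the vector computation from the proof of Proposition \ref{14}. Fix a unit vector $x$ and write $C=A\nabla_v B$, whose numerical range still lies in $[m,M]$ because $m\le A,B\le M$. Applying the reversed Jensen inequality to $A$ and to $B$, then the convexity of $f$, and finally the left-hand convex Kantorovich inequality to the vector state $X\mapsto\langle Xx,x\rangle$, I obtain the chain $\langle(f(A)\nabla_v f(B))x,x\rangle\ge f(\langle Ax,x\rangle)\nabla_v f(\langle Bx,x\rangle)\ge f(\langle Cx,x\rangle)\ge\tfrac{1}{K}\langle f(C)x,x\rangle$. Since $x$ is arbitrary, this is the operator inequality $\tfrac{1}{K}f(A\nabla_v B)\le f(A)\nabla_v f(B)$, the reverse of the intermediate inequality established in Proposition \ref{14}.

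Finally I would apply $\Phi$, using that it is linear and normalized so that it commutes with $\nabla_v$, to get $\tfrac{1}{K}\Phi(f(C))\le\Phi(f(A))\nabla_v\Phi(f(B))$, and then insert a second factor $\tfrac{1}{K}$ through the right-hand convex Kantorovich inequality $f(\Phi(C))\le K\Phi(f(C))$ with $C=A\nabla_v B$. Chaining these yields $\tfrac{1}{K^2}f(\Phi(A\nabla_v B))\le\tfrac{1}{K}\Phi(f(C))\le\Phi(f(A))\nabla_v\Phi(f(B))$, which is the desired assertion.

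The only genuinely new point — everything else being a sign-reversed transcription of Proposition \ref{14} — is the passage from the concave estimate \eqref{1} to its convex form, so I expect the bookkeeping around $K(m,M,-f)=K(m,M,f)$ to be the step most worth stating carefully; it is, however, entirely routine. In fact, once that identity is in hand the whole proposition collapses to a single stroke: apply Proposition \ref{14} to the concave function $-f$ and multiply the resulting inequality by $-1$, noting that $(-X)\nabla_v(-Y)=-(X\nabla_v Y)$ and $K(m,M,-f)^2=K(m,M,f)^2$. I would probably present the direct vector argument for transparency but remark on this shortcut.
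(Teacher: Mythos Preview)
Your proposal is correct and matches the paper's intent: the paper supplies no explicit proof of Proposition~\ref{9}, tacitly leaving the reader to dualize Proposition~\ref{14}, and that is precisely what you do---both via the step-by-step sign-reversed vector argument and via the one-line substitution $g=-f$ together with $K(m,M,-f)=K(m,M,f)$. Either presentation is fine; the shortcut is closest in spirit to what the paper expects.
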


Letting $f\left( t \right)={{\left( 1-t \right)}^{r}}\left( t<1, r\notin [-1,0]\cap[1,2] \right)$ in Proposition \ref{9}, we infer the following result:

\begin{theorem}\label{11}
Let $A,B\in \mathcal{B}\left( \mathcal{H} \right)$ be two contractions operators such that $m\le A,B\le M$ and $\Phi $ be normalized positive linear map. Then for any $v\in[0,1]$ and $r\notin  [-1,0]\cap[1,2]$
\begin{equation}\label{12}
{{\left( I-\Phi \left( A{{\nabla }_{v}}B \right) \right)}^{r}}\le K{{\left( m,M,{{\left( 1-t \right)}^{r}} \right)}^{2}}\Phi \left( {{\left( I-A \right)}^{r}}{{\nabla }_{v}}{{\left( I-B \right)}^{r}} \right).
\end{equation}

\end{theorem}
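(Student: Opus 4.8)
The plan is to obtain \eqref{12} as a direct specialization of Proposition \ref{9} to the one-parameter family $f\left(t\right)=\left(1-t\right)^{r}$. First I would fix the interval $\left[m,M\right]$ and observe that, since $A$ and $B$ are contractions with $m\le A,B\le M$, one has $M<1$, so that $1-t>0$ throughout $\left[m,M\right]$ and $f\left(t\right)=\left(1-t\right)^{r}$ is a well-defined smooth function there. The only genuine verification at this stage is convexity: differentiating twice gives $f''\left(t\right)=r\left(r-1\right)\left(1-t\right)^{r-2}$, and because $\left(1-t\right)^{r-2}>0$ on $\left[m,M\right]$, the sign of $f''$ is governed by $r\left(r-1\right)$; hence $f$ is convex precisely when $r\le 0$ or $r\ge 1$, which is the hypothesis needed to invoke Proposition \ref{9}. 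I would also note that $m\le A,B\le M$ together with $\Phi$ normalized positive gives $m\le \Phi\left(A\nabla_{v}B\right)\le M$, so $f$ is applied to operators whose spectra stay in its domain.

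With convexity in hand, applying Proposition \ref{9} to this $f$ yields
\[
\frac{1}{K\left(m,M,f\right)^{2}}\,f\!\left(\Phi\left(A\nabla_{v}B\right)\right)\le \Phi\left(f\left(A\right)\right)\nabla_{v}\Phi\left(f\left(B\right)\right).
\]
Next I would translate each term back through the functional calculus. On the left, $f\left(\Phi\left(A\nabla_{v}B\right)\right)=\left(I-\Phi\left(A\nabla_{v}B\right)\right)^{r}$ directly from $f\left(t\right)=\left(1-t\right)^{r}$, and on the right $\Phi\left(f\left(A\right)\right)=\Phi\left(\left(I-A\right)^{r}\right)$ and $\Phi\left(f\left(B\right)\right)=\Phi\left(\left(I-B\right)^{r}\right)$.

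The remaining step is purely algebraic. Since $X\nabla_{v}Y=\left(1-v\right)X+vY$ and $\Phi$ is linear, the right-hand side collapses as
\[
\Phi\left(\left(I-A\right)^{r}\right)\nabla_{v}\Phi\left(\left(I-B\right)^{r}\right)=\Phi\left(\left(1-v\right)\left(I-A\right)^{r}+v\left(I-B\right)^{r}\right)=\Phi\left(\left(I-A\right)^{r}\nabla_{v}\left(I-B\right)^{r}\right).
\]
Multiplying the resulting inequality through by the positive scalar $K\left(m,M,\left(1-t\right)^{r}\right)^{2}$ then gives exactly \eqref{12}.

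I expect no deep obstacle here, since the theorem is essentially a reinterpretation of Proposition \ref{9}; the delicate points are bookkeeping rather than analysis. Concretely, the two things to watch are: first, confirming that the range of $r$ asserted in the statement coincides with the convexity region $r\left(r-1\right)\ge 0$ of $f$ (i.e. $r\le 0$ or $r\ge 1$), so that Proposition \ref{9} legitimately applies and one is genuinely outside the regime $r\in\left[-1,0\right]\cup\left[1,2\right]$ of the known reverse inequality; second, ensuring $M<1$ so that $\left(1-t\right)^{r}$ and the associated constant $K\left(m,M,\left(1-t\right)^{r}\right)$ from \eqref{2} are well-defined and finite. Once these are settled, the derivation is immediate.
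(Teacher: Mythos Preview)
Your proposal is correct and matches the paper's approach exactly: the paper derives Theorem \ref{11} in one line by simply letting $f(t)=(1-t)^{r}$ in Proposition \ref{9}, and your write-up just fleshes out that substitution with the convexity check and the routine functional-calculus and linearity bookkeeping. Your caution about the range of $r$ is well placed, since the stated condition $r\notin[-1,0]\cap[1,2]$ is vacuous and the argument really requires $r(r-1)\ge 0$ (so $r\le 0$ or $r\ge 1$), consistent with the paper's intended target $r\notin[-1,0]\cup[1,2]$.
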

\begin{remark}
	In the following we aim to find the value of $K{{\left( m,M,{{\left( 1-t \right)}^{r}} \right)}^{2}}$ in Theorem \ref{11}. To do this end, it is enough we find the maximum of the function $h\left( t \right)\equiv {{\left( \frac{\mu t+\lambda }{{{\left( 1-t \right)}^{r}}} \right)}^{2}}$ for  $0<m\le t\le M<1$ and $r\notin \left[ 1,2 \right]\cup \left[ -1,0 \right]$ with $\mu =\frac{{{\left( 1-M \right)}^{r}}-{{\left( 1-m \right)}^{r}}}{M-m}$ and $\lambda =\frac{M{{\left( 1-m \right)}^{r}}-m{{\left( 1-M \right)}^{r}}}{M-m}$.  Notice that $\left\{ \begin{array}{rr}
	 \mu <0,\lambda >0&\text{ for }r>2 \\ 
	 \mu >0,\lambda <0&\text{ for }r<-1 \\ 
	\end{array} \right.$. By an easy computation we  infer
\[h'\left( t \right)=-\frac{2\left( \mu t+\lambda  \right)\left( \left( r-1 \right)\mu x+\lambda r+\mu  \right)}{{{\left( 1-t \right)}^{2r}}\left( t-1 \right)},\] 
and
\[h'\left( t \right)=0\text{ }\Rightarrow \text{ }{{t}_{0}}=-\frac{\lambda }{\mu }\quad\&\quad{{t}_{1}}=-\frac{\lambda r+\mu }{\mu \left( r-1 \right)}.\]
It is not hard to check that $h(t)$ attains its maximum at $t_1$ provided that $m\le t_1\le M$. To see the proof of inequalities ${{t}_{1}}-m\ge 0$ and $M-{{t}_{1}}\ge 0$ we refer the reader to \cite{smf}.
\end{remark}
\begin{remark}
Assume $n$ is a positive integers, $r>2$, $v\in \left[ 0,1 \right]$ and ${{a}_{k}},{{b}_{k}}\left( 1\le k\le n \right)$ are positive real numbers such that $0<m\leq\sum\nolimits_{k=1}^{n}{a_{k}^{\frac{1}{r}}}\le M< 1$ and $0<m\leq\sum\nolimits_{k=1}^{n}{b_{k}^{\frac{1}{r}}}\le M< 1$. Take $A=\left( \begin{matrix}
\sum\nolimits_{k=1}^{n}{a_{k}^{\frac{1}{r}}} & 0  \\
0 & 1  \\
\end{matrix} \right)\in {{M}_{2}}\left( \mathbb{C} \right)$, $B=\left( \begin{matrix}
\sum\nolimits_{k=1}^{n}{b_{k}^{\frac{1}{r}}} & 0  \\
0 & 1  \\
\end{matrix} \right)\in {{M}_{2}}\left( \mathbb{C} \right)$.    Consequently, 
\[\begin{aligned}
 {{\left( I-A{{\nabla }_{v}}B \right)}^{r}}&={{\left( \left( \begin{matrix}
		1 & 0  \\
		0 & 1  \\
		\end{matrix} \right)-\left( \begin{matrix}
		\left( 1-v \right)\sum\nolimits_{k=1}^{n}{a_{k}^{\frac{1}{r}}+v\sum\nolimits_{k=1}^{n}{b_{k}^{\frac{1}{r}}}} & 0  \\
		0 & 1  \\
		\end{matrix} \right) \right)}^{r}} \\ 
& =\left( \begin{matrix}
\left( 1-\sum\nolimits_{k=1}^{n}{a_{k}^{\frac{1}{r}}} \right){{\nabla }_{v}}{{\left( 1-\sum\nolimits_{k=1}^{n}{b_{k}^{\frac{1}{r}}} \right)}^{r}} & 0  \\
0 & 0  \\
\end{matrix} \right),  
\end{aligned}\]
and
\[\begin{aligned}
 {{\left( I-A \right)}^{r}}{{\nabla }_{v}}{{\left( I-B \right)}^{r}}&=\left( 1-v \right){{\left( \begin{matrix}
		1-\sum\nolimits_{k=1}^{n}{a_{k}^{\frac{1}{r}}} & 0  \\
		0 & 0  \\
		\end{matrix} \right)}^{r}}+v{{\left( \begin{matrix}
		1-\sum\nolimits_{k=1}^{n}{b_{k}^{\frac{1}{r}}} & 0  \\
		0 & 0  \\
		\end{matrix} \right)}^{r}} \\ 
& =\left( \begin{matrix}
{{\left( 1-\sum\nolimits_{k=1}^{n}{a_{k}^{\frac{1}{r}}} \right)}^{r}}{{\nabla }_{v}}{{\left( 1-\sum\nolimits_{k=1}^{n}{b_{k}^{\frac{1}{r}}} \right)}^{r}} & 0  \\
0 & 0  \\
\end{matrix} \right).  
\end{aligned}\]
Now,
\[\begin{aligned}
 {{\left( 1-\sum\limits_{k=1}^{n}{{{\left( {{a}_{k}}{{\nabla }_{v}}{{b}_{k}} \right)}^{\frac{1}{r}}}} \right)}^{r}}&\le {{\left( 1-\sum\limits_{k=1}^{n}{\left( a_{k}^{\frac{1}{r}}{{\nabla }_{v}}b_{k}^{\frac{1}{r}} \right)} \right)}^{r}} \quad \text{(since $f\left( t \right)={{t}^{\frac{1}{r}}}\left( r>2 \right)$ is concave)}\\ 
& ={{\left( 1-\left( \sum\limits_{k=1}^{n}{a_{k}^{\frac{1}{r}}}{{\nabla }_{v}}\sum\limits_{k=1}^{n}{b_{k}^{\frac{1}{r}}} \right) \right)}^{r}} \\ 
& ={{\left( \left( 1-\sum\limits_{k=1}^{n}{a_{k}^{\frac{1}{r}}} \right){{\nabla }_{v}}\left( 1-\sum\limits_{k=1}^{n}{b_{k}^{\frac{1}{r}}} \right) \right)}^{r}} \\ 
& \le \xi \left( {{\left( 1-\sum\limits_{k=1}^{n}{a_{k}^{\frac{1}{r}}} \right)}^{r}}{{\nabla }_{v}}{{\left( 1-\sum\limits_{k=1}^{n}{b_{k}^{\frac{1}{r}}} \right)}^{r}} \right) \quad \text{(by Corollary \ref{11})}. 
\end{aligned}\]
Take $v=\frac{{{M}_{2}}}{{{M}_{1}}+{{M}_{2}}}$ and substitute ${{a}_{k}}$ and ${{b}_{k}}$ by $\frac{{{a}_{k}}}{{{M}_{1}}}$ and $\frac{{{b}_{k}}}{{{M}_{2}}}$, respectively. We infer that
\[\begin{aligned}
& {{\left( 1-\sum\limits_{k=1}^{n}{{{\left( \left( 1-\frac{{{M}_{2}}}{{{M}_{1}}+{{M}_{2}}} \right)\left( \frac{{{a}_{k}}}{{{M}_{1}}} \right)+\left( \frac{{{M}_{2}}}{{{M}_{1}}+{{M}_{2}}} \right)\left( \frac{{{b}_{k}}}{{{M}_{2}}} \right) \right)}^{\frac{1}{r}}}} \right)}^{r}} \\ 
& \le \xi \left( \left( 1-\frac{{{M}_{2}}}{{{M}_{1}}+{{M}_{2}}} \right){{\left( 1-\sum\limits_{k=1}^{n}{{{\left( \frac{{{a}_{k}}}{{{M}_{1}}} \right)}^{\frac{1}{r}}}} \right)}^{r}}+\frac{{{M}_{2}}}{{{M}_{1}}+{{M}_{2}}}{{\left( 1-\sum\limits_{k=1}^{n}{{{\left( \frac{{{b}_{k}}}{{{M}_{2}}} \right)}^{\frac{1}{r}}}} \right)}^{r}} \right).\\ 
\end{aligned}\]
Equivalently,
\[\begin{aligned}
& \frac{1}{{{M}_{1}}+{{M}_{2}}}{{\left( {{\left( {{M}_{1}}+{{M}_{2}} \right)}^{\frac{1}{r}}}-\sum\limits_{k=1}^{n}{{{\left( {{a}_{k}}+{{b}_{k}} \right)}^{\frac{1}{r}}}} \right)}^{r}} \\ 
& \le \xi \left( \frac{1}{{{M}_{1}}+{{M}_{2}}}{{\left( M_{1}^{\frac{1}{r}}-\sum\limits_{k=1}^{n}{a_{k}^{\frac{1}{r}}} \right)}^{r}}+\frac{1}{{{M}_{1}}+{{M}_{2}}}{{\left( M_{2}^{\frac{1}{r}}-\sum\limits_{k=1}^{n}{b_{k}^{\frac{1}{r}}} \right)}^{r}} \right).
\end{aligned}\]

\end{remark}

By choosing $f\left( t \right)=\exp \left( t \right)$ in Proposition \ref{9} we have:
\begin{corollary}
Let $A,B\in \mathcal{B}\left( \mathcal{H} \right)$ be two self-adjoint operators such that $m\le A,B\le M$ and $\Phi $ be normalized positive linear map. Then
\[\exp \left( \Phi \left( A{{\nabla }_{v}}B \right) \right)\le K{{\left( m,M,\exp \left( t \right) \right)}^{2}}\Phi \left( \exp \left( A \right){{\nabla }_{v}}\exp \left( B \right) \right).\]
\end{corollary}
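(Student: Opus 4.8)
The plan is to obtain this corollary as a direct specialization of Proposition \ref{9}, since the text already announces it via the choice $f\left( t \right)=\exp \left( t \right)$. The only function-specific input I would need is that $\exp $ is convex on $\left[ m,M \right]$, which is immediate because ${{\exp }''}\left( t \right)=\exp \left( t \right)>0$ everywhere. Combined with the standing hypotheses that $A,B$ are self-adjoint with $m\le A,B\le M$ and that $\Phi $ is a normalized positive linear map, this means all the assumptions of Proposition \ref{9} (in its convexity form) are in force, so I may apply it verbatim.

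First I would invoke Proposition \ref{9} with $f=\exp $, which gives
\[\frac{1}{K{{\left( m,M,\exp  \right)}^{2}}}\exp \left( \Phi \left( A{{\nabla }_{v}}B \right) \right)\le \Phi \left( \exp \left( A \right) \right){{\nabla }_{v}}\Phi \left( \exp \left( B \right) \right).\]
Next, noting that ${{K\left( m,M,\exp  \right)}^{2}}$ is a strictly positive scalar (the chord interpolating the positive function $\exp $ between $m$ and $M$ is positive, so $K>0$), I would multiply both sides by ${{K\left( m,M,\exp  \right)}^{2}}$; scaling an operator inequality by a positive constant preserves the ordering, hence
\[\exp \left( \Phi \left( A{{\nabla }_{v}}B \right) \right)\le {{K\left( m,M,\exp  \right)}^{2}}\left( \Phi \left( \exp \left( A \right) \right){{\nabla }_{v}}\Phi \left( \exp \left( B \right) \right) \right).\]

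Finally, I would rewrite the right-hand side using only the linearity of $\Phi $ together with the fact that ${{\nabla }_{v}}$ is the weighted arithmetic mean $X{{\nabla }_{v}}Y=\left( 1-v \right)X+vY$:
\[\Phi \left( \exp \left( A \right) \right){{\nabla }_{v}}\Phi \left( \exp \left( B \right) \right)=\left( 1-v \right)\Phi \left( \exp \left( A \right) \right)+v\Phi \left( \exp \left( B \right) \right)=\Phi \left( \exp \left( A \right){{\nabla }_{v}}\exp \left( B \right) \right).\]
Substituting this identity into the previous display yields exactly the claimed inequality. I do not expect any genuine obstacle in this argument: the statement is an immediate corollary of Proposition \ref{9}, and the only points deserving a word of justification are the convexity of $\exp $ and the interchange of $\Phi $ with ${{\nabla }_{v}}$, the latter being pure linearity.
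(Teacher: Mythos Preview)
Your proposal is correct and matches the paper's approach exactly: the paper simply states that the corollary follows by choosing $f(t)=\exp(t)$ in Proposition~\ref{9}, and you have supplied precisely the routine details (convexity of $\exp$, positivity of the constant, and linearity of $\Phi$ with respect to ${\nabla}_{v}$) needed to make that specialization explicit.
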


\begin{remark}
To find the value of $K{{\left( m,M,\exp \left( t \right) \right)}^{2}}$ we set $h\left( t \right)\equiv {{\left( \frac{\mu t+\lambda }{\exp t} \right)}^{2}}$ for  $0<m\le t\le M<1$ with $\mu =\frac{\exp M-\exp m}{M-m}$ and $\lambda =\frac{M\exp m-m\exp M}{M-m}$. It is easy to see that $\lambda ,\mu >0$, and
\[h'\left( t \right)=-\frac{2\left( \mu t+\lambda  \right)\left( \mu t+\lambda -\mu  \right)}{\exp 2x}.\]
Solving $h'(t)=0$, we obtain
\[{{t}_{0}}=-\frac{\lambda }{\mu }\text{  }\!\!\;\And\;\!\!\text{  }{{t}_{1}}=-\frac{\lambda -\mu }{\mu }.\]
After simple computations, we find that $h(t)$ attains its maximum at $t_1$ provided that $m\le t_1\le M$. For details we refer to \cite{1}.
\end{remark}
In the following, we aim to prove a complementary inequality for the inequality \eqref{3}. The following Lemmas will play a role later. 
\begin{lemma}\label{5}
	\cite[Lemma 3.2]{3} Let $A,B\in \mathcal{B}\left( \mathcal{H} \right)$ be two positive operators satisfying $0<m\le A,B\le M$ for some scalars. If $f:\left[ m,M \right]\to \mathbb{R}$ is a continuous concave function, then for each $v\in \left[ 0,1 \right]$
\[\beta \left( m,M,f \right)\le f\left( A \right){{\nabla }_{v}}f\left( B \right)-f\left( A{{\nabla }_{v}}B \right)\le -\beta \left( m,M,f \right)\]
	where
	\begin{equation}\label{6}
\beta \left( m,M,f \right)=\min \left\{ \frac{f\left( M \right)-f\left( m \right)}{M-m}\left( t-m \right)+f\left( m \right)-f\left( t \right):\text{ }m\le t\le M \right\}.
	\end{equation}
\end{lemma}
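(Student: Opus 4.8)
The plan is to reduce the operator statement to an elementary two-sided scalar estimate coming from concavity, and then lift it by functional calculus. Write $L$ for the chord of $f$ joining the endpoints, i.e. the affine function
\[
L(t) = \frac{f(M)-f(m)}{M-m}(t-m) + f(m), \qquad t\in[m,M].
\]
With $g(t) := L(t) - f(t)$, the quantity in \eqref{6} is exactly $\beta(m,M,f) = \min_{m\le t\le M} g(t)$. Since $f$ is concave its graph lies above every chord, so $g(t)\le 0$ on $[m,M]$; together with the definition of $\beta$ (note $g(m)=g(M)=0$, so $\beta\le 0$) this gives the scalar sandwich
\[
L(t) \le f(t) \le L(t) - \beta(m,M,f), \qquad t\in[m,M].
\]

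First I would pass to operators. Because $mI\le A,B\le MI$ we also have $mI \le A\nabla_v B \le MI$ for every $v\in[0,1]$, so the spectra of $A$, $B$ and $A\nabla_v B$ all lie in $[m,M]$ and the functional calculus applies to the scalar sandwich. Feeding each of these three operators into it yields $L(T)\le f(T)\le L(T)-\beta I$ for $T\in\{A,B,A\nabla_v B\}$. The one structural fact I would exploit is that $L$ is affine, hence commutes with the operator mean $\nabla_v$: writing $L(t)=\alpha t+\gamma$ one checks directly that $L(A)\nabla_v L(B) = \alpha(A\nabla_v B)+\gamma I = L(A\nabla_v B)$.

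For the lower bound I would combine $f(A)\ge L(A)$ and $f(B)\ge L(B)$, so that $f(A)\nabla_v f(B)\ge L(A)\nabla_v L(B)=L(A\nabla_v B)$, with the upper estimate $f(A\nabla_v B)\le L(A\nabla_v B)-\beta I$; subtracting gives $f(A)\nabla_v f(B) - f(A\nabla_v B) \ge \beta I$. For the upper bound I would instead use $f(A)\le L(A)-\beta I$ and $f(B)\le L(B)-\beta I$, so that $f(A)\nabla_v f(B)\le L(A\nabla_v B)-\beta I$, against the lower estimate $f(A\nabla_v B)\ge L(A\nabla_v B)$, obtaining $f(A)\nabla_v f(B)-f(A\nabla_v B)\le -\beta I$. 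Monotonicity of the operator order under the affine mean $\nabla_v$ is what lets me combine the two operator inequalities for $A$ and $B$ term by term.

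There is no deep obstruction here; the only points needing care are genuinely routine. I must confirm the spectral inclusion $mI\le A\nabla_v B\le MI$ so that the functional calculus is legitimate, and I must verify the affine identity $L(A)\nabla_v L(B)=L(A\nabla_v B)$, which is precisely what converts the pointwise chord bound into a statement about the mean. The final conclusion is then the expected symmetric band $\beta I \le f(A)\nabla_v f(B)-f(A\nabla_v B)\le -\beta I$ straddling $0$, consistent with $\beta(m,M,f)\le 0$.
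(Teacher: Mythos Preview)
Your argument is correct: the chord bound $L(t)\le f(t)\le L(t)-\beta(m,M,f)$ holds on $[m,M]$ by concavity and the definition of $\beta$, and since $L$ is affine it commutes with $\nabla_v$, so the two subtractions you describe give the two sides of the sandwich. Note, however, that the paper does not supply its own proof of this lemma --- it is quoted from \cite[Lemma 3.2]{3} and used as a black box --- so there is no in-paper argument to compare against. Your proof is a clean, self-contained justification that would fit naturally had the authors chosen to include one.
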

\begin{lemma}\label{4}
	\cite{1} Let $A\in \mathcal{B}\left( \mathcal{H} \right)$ such that $m\le A\le M$ and $\Phi $ be a normalized positive linear map. If $f:\left[ m,M \right]\to \mathbb{R}$ is a concave function, then
\[-\widetilde{\beta }\left( m,M,f \right)\le f\left( \Phi \left( A \right) \right)-\Phi \left( f\left( A \right) \right)\le \widetilde{\beta }\left( m,M,f \right)\]
where
\begin{equation}\label{7}
\widetilde{\beta }\left( m,M,f \right)=\max \left\{ f\left( t \right)-\frac{f\left( M \right)-f\left( m \right)}{M-m}\left( t-m \right)-f\left( m \right):\text{ }m\le t\le M \right\}.
\end{equation}
\end{lemma}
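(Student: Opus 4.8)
The plan is to exploit the decomposition of the concave function $f$ into the affine chord joining $\left( m,f(m) \right)$ and $\left( M,f(M) \right)$ together with a nonnegative concave remainder. Write $\ell(t)=\frac{f(M)-f(m)}{M-m}(t-m)+f(m)$ for the chord and set $g(t)=f(t)-\ell(t)$. Since $f$ is concave it lies above each of its chords, so $g(t)\ge 0$ on $\left[ m,M \right]$; moreover $g(m)=g(M)=0$, and comparing with \eqref{7} we see precisely that $\widetilde{\beta}\left( m,M,f \right)=\max_{m\le t\le M}g(t)$. Thus on $\left[ m,M \right]$ we have the scalar two-sided bound $0\le g(t)\le \widetilde{\beta}\left( m,M,f \right)$, which is the whole engine of the argument.

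The key reduction is that the affine part cancels under $\Phi$. Because $\Phi$ is linear and normalized, the slope term is handled by linearity and the constant term by $\Phi(I)=I$, giving $\Phi(\ell(A))=\ell(\Phi(A))$. Consequently
\[
f(\Phi(A))-\Phi(f(A))=\bigl(\ell(\Phi(A))+g(\Phi(A))\bigr)-\bigl(\Phi(\ell(A))+\Phi(g(A))\bigr)=g(\Phi(A))-\Phi(g(A)),
\]
so the problem collapses to estimating $g(\Phi(A))-\Phi(g(A))$ for the nonnegative, bounded remainder $g$.

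Finally I would invoke positivity together with the functional calculus. Since $m\le A\le M$ and $\Phi$ is normalized positive, one has $m\le \Phi(A)\le M$, so the spectra of both $A$ and $\Phi(A)$ lie in $\left[ m,M \right]$. Feeding the scalar estimate $0\le g\le \widetilde{\beta}$ through the functional calculus applied to each operator separately yields $0\le g(\Phi(A))\le \widetilde{\beta}\,I$ and $0\le g(A)\le \widetilde{\beta}\,I$; the latter, with positivity and normalization of $\Phi$, gives $0\le \Phi(g(A))\le \widetilde{\beta}\,I$. Subtracting these two nonnegative operators, $g(\Phi(A))-\Phi(g(A))\le g(\Phi(A))\le \widetilde{\beta}\,I$ and $g(\Phi(A))-\Phi(g(A))\ge -\Phi(g(A))\ge -\widetilde{\beta}\,I$, which is exactly the claimed estimate $-\widetilde{\beta}\left( m,M,f \right)\le f(\Phi(A))-\Phi(f(A))\le \widetilde{\beta}\left( m,M,f \right)$. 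The proof has no genuine obstacle; the one point deserving care is the cancellation of the affine part in the second step, where the normalization $\Phi(I)=I$ is indispensable, after which the two-sided bound is immediate.
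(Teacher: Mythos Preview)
Your proof is correct. The paper does not actually prove this lemma; it is quoted from \cite{1} without argument, so there is no in-paper proof to compare against. Your chord-plus-remainder decomposition $f=\ell+g$ with $0\le g\le\widetilde{\beta}$, the cancellation $\Phi(\ell(A))=\ell(\Phi(A))$ via linearity and $\Phi(I)=I$, and the final sandwich $-\widetilde{\beta}I\le g(\Phi(A))-\Phi(g(A))\le\widetilde{\beta}I$ are all sound and constitute a clean self-contained argument.
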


\begin{theorem}
	Let $A,B\in \mathcal{B}\left( \mathcal{H} \right)$ be two self-adjoint operators such that $m\le A,B\le M$ and $\Phi $ be normalized positive linear map. If $f:\left[ m,M \right]\to \mathbb{R}$ is a concave function, then
\[f\left( \Phi \left( A{{\nabla }_{v}}B \right) \right)\le \Phi \left(f\left ( A\right)\right) {{\nabla}_{v}\Phi \left(f\left(B\right)\right)+2\widetilde{\beta }\left( m,M,f \right)}\]
where   $\widetilde{\beta }\left( m,M,f \right)$  is  defined as   \eqref{7}.
\end{theorem}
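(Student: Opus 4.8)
The plan is to control $f\left(\Phi\left(A\nabla_v B\right)\right)$ by peeling off one copy of $\widetilde{\beta}\left(m,M,f\right)$ at each of two stages, so that the two error terms add up to the $2\widetilde{\beta}\left(m,M,f\right)$ appearing on the right. First I would record the elementary but crucial observation that the two error constants coincide up to sign. Comparing the definitions \eqref{6} and \eqref{7}, the quantity maximized in $\widetilde{\beta}\left(m,M,f\right)$ is exactly the negative of the quantity minimized in $\beta\left(m,M,f\right)$; hence $\widetilde{\beta}\left(m,M,f\right)=-\beta\left(m,M,f\right)$. With this identity, the left-hand inequality of Lemma \ref{5} reads $f\left(A\nabla_v B\right)\le f\left(A\right)\nabla_v f\left(B\right)+\widetilde{\beta}\left(m,M,f\right)$.

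Next I apply the normalized positive linear map $\Phi$ to this operator inequality. Since $\Phi$ is positive it preserves the order, and since it is linear it commutes with the weighted arithmetic mean, so $\Phi\left(f\left(A\right)\nabla_v f\left(B\right)\right)=\Phi\left(f\left(A\right)\right)\nabla_v\Phi\left(f\left(B\right)\right)$; moreover $\widetilde{\beta}\left(m,M,f\right)$ is a scalar and $\Phi\left(I\right)=I$, so $\Phi\left(\widetilde{\beta}\left(m,M,f\right)I\right)=\widetilde{\beta}\left(m,M,f\right)I$. This yields
\[\Phi\left(f\left(A\nabla_v B\right)\right)\le\Phi\left(f\left(A\right)\right)\nabla_v\Phi\left(f\left(B\right)\right)+\widetilde{\beta}\left(m,M,f\right).\]

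Finally I apply Lemma \ref{4} to the single self-adjoint operator $C:=A\nabla_v B$, which satisfies $m\le C\le M$. Its right-hand inequality gives $f\left(\Phi\left(C\right)\right)\le\Phi\left(f\left(C\right)\right)+\widetilde{\beta}\left(m,M,f\right)$, that is $f\left(\Phi\left(A\nabla_v B\right)\right)\le\Phi\left(f\left(A\nabla_v B\right)\right)+\widetilde{\beta}\left(m,M,f\right)$. Chaining this with the previous display produces
\[f\left(\Phi\left(A\nabla_v B\right)\right)\le\Phi\left(f\left(A\right)\right)\nabla_v\Phi\left(f\left(B\right)\right)+2\widetilde{\beta}\left(m,M,f\right),\]
as required.

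I expect the only real content to be the identity $\widetilde{\beta}=-\beta$: it is what lets the two separately-derived error terms—one from the mean-to-function estimate of Lemma \ref{5}, one from the map-to-function estimate of Lemma \ref{4}—collapse into a single constant $2\widetilde{\beta}$. A minor technical point to watch is that Lemma \ref{5} is stated for strictly positive operators, so if $m\le 0$ one would either invoke the translation-invariance of the underlying scalar estimate or re-derive the bound via vector states as in the proof of Proposition \ref{14}; otherwise the argument is a direct concatenation of the two lemmas.
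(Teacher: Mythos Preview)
Your proof is correct and follows essentially the same route as the paper: both arguments chain the right-hand inequality of Lemma \ref{4} (applied to $C=A\nabla_v B$) with the left-hand inequality of Lemma \ref{5} (pushed through $\Phi$), the only cosmetic difference being the order in which the two error terms are introduced. You make the identity $\widetilde{\beta}(m,M,f)=-\beta(m,M,f)$ explicit at the outset, whereas the paper leaves it implicit in its final line $\widetilde{\beta}-\beta=2\widetilde{\beta}$; your remark about the positivity hypothesis in Lemma \ref{5} is a fair caveat but does not affect the argument.
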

\begin{proof}
	We have
\[\begin{aligned}
 f\left( \Phi \left( A \right){{\nabla }_{v}}\Phi \left( B \right) \right)&=f\left( \Phi \left( A{{\nabla }_{v}}B \right) \right) \\ 
& \le \widetilde{\beta }\left( m,M,f \right)+\Phi \left( f\left( A{{\nabla }_{v}}B \right) \right) \quad \text{(by Lemma \ref{4})} \\ 
& \le \widetilde{\beta }\left( m,M,f \right)-\beta \left( m,M,f \right)+\Phi \left( f\left( A \right){{\nabla }_{v}}f\left( B \right) \right)  \quad \text{(by Lemma \ref{5}).}
\end{aligned}\]
The proof is completed. 
\end{proof}
\begin{corollary}
Let $A,B\in \mathcal{B}\left( \mathcal{H} \right)$ be two contractions operators such that $m\le A,B\le M$ and $\Phi $ be normalized positive linear map. Then for any  $v\in[0,1]$ and $r\notin[-1,0]\cup [1,2]$,
\[{{\left( I-\Phi \left( A{{\nabla }_{v}}B \right) \right)}^{r}}\le \Phi \left( {{\left( I-A \right)}^{r}}{{\nabla }_{v}}{{\left( I-B \right)}^{r}} \right)+2\widetilde{\beta }\left( m,M,(1-t)^r \right).\]

\end{corollary}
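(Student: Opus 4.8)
The plan is to obtain the Corollary as the specialization of the Theorem immediately preceding it to the scalar function $f(t)=(1-t)^{r}$. The contraction hypothesis together with $m\le A,B\le M$ lets me take $M<1$, so that $1-t>0$ for every $t\in[m,M]$ and $f(t)=(1-t)^{r}$ is well defined on $[m,M]$ (including for negative $r$) and may be read off through the functional calculus on $A$, $B$, and $A\nabla_v B$.

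First I would verify the concavity hypothesis required by that Theorem. Since $f''(t)=r(r-1)(1-t)^{r-2}$ and $(1-t)^{r-2}>0$ on $[m,M]$, the sign of $f''$ is that of $r(r-1)$, so $f$ is concave exactly when $0\le r\le 1$. Hence, within the range $r\notin[-1,0]\cup[1,2]$, the concave Theorem applies verbatim for $0<r<1$. For $r<-1$ and $r>2$ the function $(1-t)^{r}$ is instead convex, so the concave Theorem does not apply directly; reconciling these two parts of the parameter range, via the convex analogues of Lemmas \ref{5} and \ref{4} together with the identity $\widetilde\beta(m,M,f)=-\beta(m,M,f)$ valid for every $f$, is the step that needs the most care.

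Next I would translate the three terms of the Theorem through the functional calculus. Using $f(t)=(1-t)^{r}$ and the normalization $\Phi(I)=I$, the left-hand term becomes $f(\Phi(A\nabla_v B))=(I-\Phi(A\nabla_v B))^{r}$. On the right, $f(A)=(I-A)^{r}$ and $f(B)=(I-B)^{r}$, so $\Phi(f(A))\nabla_v\Phi(f(B))=\Phi((I-A)^{r})\nabla_v\Phi((I-B)^{r})$, while the correction term is $2\widetilde\beta(m,M,(1-t)^{r})$.

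Finally I would collapse the weighted arithmetic mean into a single application of $\Phi$. Because $X\nabla_v Y=(1-v)X+vY$ is an affine combination and $\Phi$ is linear, $\Phi((I-A)^{r})\nabla_v\Phi((I-B)^{r})=\Phi((I-A)^{r}\nabla_v(I-B)^{r})$. Substituting these identifications into the Theorem yields
\[(I-\Phi(A\nabla_v B))^{r}\le\Phi\big((I-A)^{r}\nabla_v(I-B)^{r}\big)+2\widetilde\beta(m,M,(1-t)^{r}),\]
which is exactly the assertion. As flagged above, the only real obstacle is the concavity bookkeeping across the two pieces of the parameter range; once that is settled the remaining steps are just the linearity and normalization identities for $\Phi$.
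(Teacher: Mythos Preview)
Your plan—specialize the preceding Theorem to $f(t)=(1-t)^{r}$ and then rewrite each term via the functional calculus together with the linearity and normalization of $\Phi$—is exactly what the paper intends: the Corollary is stated immediately after that Theorem with no separate argument, so the implicit proof is precisely this substitution.

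The obstacle you flagged is genuine, however, and the paper does not address it. On the portion of the range $r<-1$ or $r>2$ the function $(1-t)^{r}$ is convex, so the concave Theorem does not apply as written. Your proposed cure through the convex analogues of Lemmas~\ref{5} and~\ref{4} does go through, but it delivers the additive correction $2\widetilde\beta\bigl(m,M,-(1-t)^{r}\bigr)$, not $2\widetilde\beta\bigl(m,M,(1-t)^{r}\bigr)$. These are different numbers: for any convex $f$ the secant line dominates $f$ on $[m,M]$ with equality at the endpoints, so $\widetilde\beta(m,M,f)=\max_{t}\{f(t)-\text{secant}(t)\}=0$, whereas $\widetilde\beta(m,M,-f)=\max_{t}\{\text{secant}(t)-f(t)\}>0$. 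With the printed correction term the inequality would collapse, for instance at $r=3$, to $(I-\Phi(A\nabla_vB))^{3}\le\Phi\bigl((I-A)^{3}\nabla_v(I-B)^{3}\bigr)$, which would amount to operator convexity of $t\mapsto(1-t)^{3}$ and is false. So the ``bookkeeping'' you isolated cannot be completed in a way that recovers the Corollary exactly as stated; the natural constant for the convex range is $2\widetilde\beta\bigl(m,M,-(1-t)^{r}\bigr)$, and this discrepancy is present in the paper as well as in your write-up.
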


\vskip 0.9 true cm

{\tiny $^{1,2,3}$Department of Mathematics, Mashhad Branch, Islamic Azad University, Mashhad, Iran.
\vskip 0.2 true cm		
	{\it E-mail address:} shiva.sheybani95@gmail.com 
\vskip 0.1 true cm	
	{\it E-mail address:} erfanian@mshdiau.ac.ir
\vskip 0.1 true cm	
{\it E-mail address:} khanehgir@mshdiau.ac.ir


\end{document}